\newcommand{\tl}{\operatorname{tl}}
\newcommand{\Av}{\operatorname{Av}}
\newcommand{\LRmax}{\operatorname{LRmax}}
\newcommand{\del}{\operatorname{del}}
\newtheorem{theorem}{Theorem}[section]
\newtheorem{proposition}[theorem]{Proposition}
\newtheorem{lemma}[theorem]{Lemma}
\theoremstyle{definition}
\newtheorem{example}[theorem]{Example}
\begin{document}

\title[]{Highly Sorted Permutations and Bell Numbers}
\subjclass[2010]{}

\author[]{Colin Defant}
\address[]{Department of Mathematics, Princeton University, Princeton, NJ 55455}
\email{cdefant@princeton.edu}

\begin{abstract}
Let $s$ denote West's stack-sorting map. For all positive integers $m$ and all integers $n\geq 2m-2$, we give a simple characterization of the set $s^{n-m}(S_n)$; as a consequence, we find that $|s^{n-m}(S_n)|$ is the $m^\text{th}$ Bell number $B_m$. We also prove that the restriction $n\geq 2m-2$ is tight by showing that $|s^{m-3}(S_{2m-3})|=B_m+m-2$ for all $m\geq 3$.   
\end{abstract}

\maketitle

\vspace{-.1cm}
\section{Introduction}

West's stack-sorting map is a function $s$ that sends permutations to permutations; it was defined by West in his dissertation \cite{West} as a deterministic version of a stack-sorting machine introduced by Knuth in \emph{The Art of Computer Programming} \cite{Knuth}. There has been a great deal of interest in $s$ from the point of view of sorting permutations (see \cite{Bona, BonaSurvey, Zeilberger, DefantCounting, DefantTroupes, DefantMonotonicity} and the references therein) since, as is easily verified, $s^{n-1}$ sends every permutation in $S_n$ to the identity permutation $123\cdots n$. The stack-sorting map also has interesting properties that closely link it with other parts of combinatorics such as combinatorial free probability theory (see \cite{DefantCatalan, DefantEngenMiller, DefantTroupes, Hanna} and the references therein).

Many of the classical questions about the stack-sorting map concern the notion of a  \emph{$t$-stack-sortable} permutation, which is a permutation $\pi$ such that $s^t(\pi)$ is increasing. Knuth \cite{Knuth} initiated both the study of stack-sorting and the study of permutation patterns when he showed that a permutation is $1$-stack-sortable if and only if it avoids the pattern $231$. He was also the first to use what is now called the ``kernel method'' (see \cite{Banderier} for details) when he proved that the number of $1$-stack-sortable (i.e., $231$-avoiding) permutations in $S_n$ is the $n^\text{th}$ Catalan number $\frac{1}{n+1}\binom{2n}{n}$. West \cite{West} characterized $2$-stack-sortable permutations and formulated the conjecture, which Zeilberger \cite{Zeilberger} later proved, that the number of $2$-stack-sortable permutations in $S_n$ is $\frac{2}{(n+1)(2n+1)}\binom{3n}{n}$. \'Ulfarsson \cite{Ulfarsson} found a complicated characterization of $3$-stack-sortable permutations in terms of what he called ``decorated patterns,'' but it was not until recently that a polynomial-time algorithm for counting $3$-stack-sortable permutations was found in \cite{DefantCounting}. It is likely that there is no simple formula that enumerates $3$-stack-sortable permutations, and $4$-stack-sortable permutations are probably even more unwieldy. 

In \cite{Bousquet}, Bousquet-M\'elou defined a permutation to be \emph{sorted} if it is in the image of $s$, and she found a recurrence relation that can be used to count sorted permutations. However, the asymptotics of the sequence enumerating sorted permutations is still not well understood; the current author recently proved that the limit $\displaystyle\lim_{n\to\infty}\left(|s(S_n)|/n!\right)^{1/n}$ exists and lies between $0.68631$ and $0.75260$ (the proof of the upper bound makes use of free probability theory and generalized hypergeometric functions). We say a permutation is \emph{$t$-sorted} if it is in the image of $s^t$. The article \cite{DefantDescents} proves that the maximum number of descents that a $t$-sorted permutation of length $n$ can have is $\left\lfloor\frac{n-t}{2}\right\rfloor$ and also characterizes the permutations that achieve this maximum when $n\equiv t\pmod 2$.  

In this paper, we continue the study of $t$-sorted permutations in $S_n$, focusing on their characterization and enumeration when $t$ is close to $n$; in this case, we casually call $t$-sorted permutations of length $n$ \emph{highly sorted}. Our motivation for this line of work comes from the recent article \cite{Asinowski}, which thoroughly explores many aspects of the pop-stack-sorting map $\mathsf{Pop}$ (an interesting variant of West's stack-sorting map). Just as $s^{n-1}(S_n)=\{123\cdots n\}$, it is known (though surprisingly difficult to prove) that $\mathsf{Pop}^{n-1}(S_n)=\{123\cdots n\}$. The paper \cite{Asinowski} gives a very nice characterization of the set $\mathsf{Pop}^{n-2}(S_n)$. For each fixed $m\geq 1$, we will characterize and enumerate the sets $s^{n-m}(S_n)$ for all $n\geq 2m-2$.  

The notion of a $t$-sorted permutation is in some sense dual to that of a $t$-stack-sortable permutation. Hence, another motivation for studying highly sorted permutations comes from results in the literature concerning $t$-stack-sortable permutations of length $n$ for $t$ close to $n$. West \cite{West} showed that a permutation in $S_n$ is $(n-2)$-stack-sortable if and only if it does not end in the suffix $n1$. He also characterized and enumerated the $(n-3)$-stack-sortable permutations in $S_n$. 
Claesson, Dukes, and Steingr\'imsson \cite{Claessonn-4} continued this line of work by characterizing and enumerating $(n-4)$-stack-sortable permutations in $S_n$. In the same article, the authors conjectured that for every fixed $m\geq 1$, there exist positive integers $a_0,\ldots,a_{m-1}$ such that the number of $(n-m)$-stack-sortable permutations in $S_n$ that are not $(n-m-1)$-stack-sortable is $\frac{(m-1)!(n-m-1)!}{(2m-2)!}\sum_{i=0}^{m-1}a_i{n-2m\choose i}$ for all $n\geq 2m$. One can think of Theorem~\ref{Thm1} below as a sort of dual of this conjecture. 

We need just a bit more notation in order to state our main result. Throughout this paper, a \emph{permutation} is an ordering of a finite set of positive integers (for example, we consider $2584$ to be a permutation). We write $S_n$ for the set of permutations of the set $[n]:=\{1,\ldots,n\}$. A \emph{descent} of a permutation $\pi=\pi_1\cdots\pi_n$ is an index $i\in[n-1]$ such that $\pi_i>\pi_{i+1}$. If $i$ is a descent of $\pi$, then we call the entry $\pi_i$ a \emph{descent top} of $\pi$. A \emph{left-to-right maximum} of $\pi$ is an entry $\pi_j$ such that $\pi_j>\pi_\ell$ for all $1\leq \ell<j$; let $\LRmax(\pi)$ denote the set of left-to-right maxima of $\pi$. The \emph{tail length} of a permutation $\pi=\pi_1\cdots\pi_n\in S_n$, denoted $\tl(\pi)$, is the largest integer $\ell\in\{0,\ldots,n\}$ such that $\pi_i=i$ for all $i\in\{n-\ell+1,\ldots,n\}$. For example, $\tl(23145)=2$, $\tl(23154)=0$, and $\tl(12345)=5$. Recall that the $n^\text{th}$ \emph{Bell number} $B_n$ is defined to be the number of set partitions of the set $[n]$. The Bell numbers form the OEIS sequence A000110 \cite{OEIS} and can alternatively be defined via their exponential generating function \[\sum_{n\geq 0}B_n\frac{x^n}{n!}=e^{e^x-1}.\]

\begin{theorem}\label{Thm1}
Let $m$ and $n$ be positive integers such that  $n\geq 2m-2$. A permutation $\pi\in S_n$ is in the image of $s^{n-m}$ if and only if $\tl(\pi)\geq n-m$ and every descent top of $\pi$ is a left-to-right maximum of $\pi$. Consequently, \[|s^{n-m}(S_n)|=B_m.\] 
\end{theorem}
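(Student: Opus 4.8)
The plan is to strip off the portion of the permutation that has already been sorted and work with the short ``active'' prefix. Using the recursive description $s(LnR)=s(L)\,s(R)\,n$ (where $n$ denotes the largest entry), I would first record two elementary facts. Since $s$ always carries the largest entry of a permutation to the last position, $\tl(s(\pi))\geq\tl(\pi)+1$ whenever $\pi$ is not increasing; hence every permutation in $s^{n-m}(S_n)$ has tail length at least $n-m$. And if $\tl(\pi)\geq n-j$, so that $\pi=\rho\,(j+1)(j+2)\cdots n$ for some $\rho\in S_j$, then $s(\pi)=s(\rho)\,(j+1)(j+2)\cdots n$. Combining these, every $\pi\in s^{n-m}(S_n)$ can be written $\pi=\rho\,(m+1)\cdots n$ with $\rho\in S_m$, and since appending an increasing run of new largest entries affects neither the descent tops nor which entries are left-to-right maxima, $\pi$ has the property described in the theorem exactly when $\rho$ does. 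Writing $\mathcal A_m$ for the set of $\rho\in S_m$ in which every descent top is a left-to-right maximum (call such a $\rho$ \emph{good}), Theorem~\ref{Thm1} is therefore equivalent to the statement that the active parts occurring among the elements of $s^{n-m}(S_n)$ are exactly the good permutations of $S_m$ whenever $n\geq 2m-2$; the count $|s^{n-m}(S_n)|=B_m$ then follows from the identity $|\mathcal A_m|=B_m$.

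For the enumeration, each good $\rho\in S_m$ factors uniquely as $M_1A_1M_2A_2\cdots M_kA_k$, where $M_1<M_2<\cdots<M_k=m$ are its left-to-right maxima and each $A_i$ is an increasing word whose letters are smaller than $M_i$; sending $\rho$ to the set partition of $[m]$ with blocks $\{M_i\}\cup(\text{letters of }A_i)$ is a bijection (from a partition one recovers $\rho$ by listing the blocks in order of increasing maximum, each block contributing its maximum followed by its remaining elements in increasing order), so $|\mathcal A_m|=B_m$. For the inclusion ``every good $\rho$ arises'', I would show that any good permutation ending in its own largest entry lies in the image of $s$, with an explicit good preimage: if $\sigma=M_1A_1\cdots M_k$ (so the final $A$ is empty), then $\sigma=s(\tau)$ for $\tau=M_1M_2A_1M_3A_2\cdots M_kA_{k-1}$, which is again good and, if $\sigma$ has length $N$ and ends in the run $(N-r+1)(N-r+2)\cdots N$ with $r\geq 2$, then $\tau$ ends in $(N-r+2)\cdots N$. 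Starting from $\sigma^{(0)}=\rho\,(m+1)\cdots n$ and taking this preimage repeatedly produces $\sigma^{(0)},\sigma^{(1)},\dots,\sigma^{(n-m)}$ in $S_n$ with $s(\sigma^{(i+1)})=\sigma^{(i)}$, the first $n-m$ of which are good and end in their largest entry; hence $\rho\,(m+1)\cdots n=s^{n-m}(\sigma^{(n-m)})\in s^{n-m}(S_n)$ for every $\rho\in\mathcal A_m$ (indeed for every $n\geq m$).

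It remains to show that for $n\geq 2m-2$ every element of $s^{n-m}(S_n)$ is good, and this is the step I expect to be hardest. A single application of $s$ already helps: writing $\pi\in S_n$ as $LnR$ gives $s(\pi)=s(L)s(R)\,n$, so $s^{n-m}(\pi)=s^{\,n-m-1}\!\big(s(L)s(R)\big)\cdot n$, showing that every element of $s^{n-m}(S_n)$ equals $\sigma'n$ for some $\sigma'\in s^{\,n-m-1}(S_{n-1})$; since appending the largest entry preserves goodness, an induction on $n$ using the theorem for $S_{n-1}$ disposes of every case with $n-1\geq 2m-2$, i.e.\ $n\geq 2m-1$. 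The genuinely delicate case is the boundary $n=2m-2$, where $\sigma'$ ranges over $s^{m-3}(S_{2m-3})$, a set that is \emph{not} entirely good; it is precisely the set of size $B_m+m-2$ from the companion result. One must therefore show that none of the $m-2$ non-good permutations in $s^{m-3}(S_{2m-3})$ can play the role of $\sigma'$, i.e.\ that appending $2m-2$ to such a permutation never yields an element of $s^{m-2}(S_{2m-2})$; equivalently (applying $s(LnR)=s(L)s(R)n$ once more), that $\sigma'(2m-2)$ is not even in the image of $s$ unless $\sigma'$ is good. Carrying this out requires identifying the $m-2$ bad permutations in $s^{m-3}(S_{2m-3})$ explicitly---they are produced by ``junction'' configurations $s(L)s(R)n$ with $R\neq\emptyset$ and have a rigidly constrained shape and tail length---and then tracking how a bad descent top, once created, moves relative to the two ends of a permutation under each further application of $s$. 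This is essentially the same analysis that proves $|s^{m-3}(S_{2m-3})|=B_m+m-2$ and shows the hypothesis $n\geq 2m-2$ to be sharp.
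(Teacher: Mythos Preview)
Your forward direction and the enumeration are fine. The explicit preimage $\tau=M_1M_2A_1M_3A_2\cdots M_kA_{k-1}$ is a clean alternative to the paper's inductive construction (Lemma~\ref{Lem1}), and your bijection to set partitions is essentially Callan's.

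The backward direction, however, has a genuine gap. Your induction on $n$ (peeling off the largest entry) does reduce everything to the boundary $n=2m-2$, but your treatment of that boundary is both circular and incorrect in a specific place. You propose to rely on the companion result (Theorem~\ref{Thm2}) identifying the $m-2$ bad permutations $\zeta_{\ell,m}$ in $s^{m-3}(S_{2m-3})$; but the paper's proof of Theorem~\ref{Thm2} already \emph{uses} Theorem~\ref{Thm1}, so without an independent argument you are chasing your tail. More concretely, your ``equivalently'' is false: you claim that $\zeta_{\ell,m}\,(2m-2)$ is ``not even in the image of $s$'', yet for $m=4$ one has $\zeta_{3,4}\cdot 6=321456=s(362514)\in s(S_6)$. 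What must actually be shown is $\zeta_{\ell,m}\,(2m-2)\notin s^{\,m-2}(S_{2m-2})$, and a single unwind of $s(LnR)=s(L)s(R)n$ does not give that.

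The paper sidesteps all of this by inducting on $m$ rather than $n$, deleting the \emph{smallest} entry with the operator $\del_1$ (which commutes with $s$; Lemma~\ref{Lem:del}) instead of the largest. The reduction $\widehat\pi\in s^{(n-1)-(m-1)}(S_{n-1})$ then stays within the hypothesis $n-1\geq 2(m-1)-2$, so induction forces $\del_1(\pi)$ to be good and the only possible bad pattern in $\pi$ is a $32\overline 41$ involving the entry~$1$. The key lemma you are missing (Lemma~\ref{Lem2}) shows that whenever $\pi$ has such an occurrence, any preimage does too, and at least two entries from $\{3,\ldots,n\}$ jump from the left of $1$ to its right under $s$. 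Iterating $n-m$ times forces the position of $1$ in $\pi$ to be at most $n-2(n-m)=2m-n\leq 2$, a contradiction. This ``track the position of $1$'' argument handles $n=2m-2$ uniformly with all larger $n$ and makes Theorem~\ref{Thm1} logically prior to, not dependent on, Theorem~\ref{Thm2}.
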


One might ask if the hypothesis $n\geq 2m-2$ in the previous theorem can be replaced by, say, $n\geq 2m-3$. The next theorem shows that it cannot. For $3\leq \ell\leq 2m-3$, we define $\zeta_{\ell,m}$ to be the permutation $\ell21345\cdots(\ell-1)(\ell+1)(\ell+2)\cdots (2m-3)$ in $S_{2m-3}$. This is the permutation obtained by swapping the entries $1$ and $2$ in the identity permutation $123\cdots (2m-3)$ and then moving the entry $\ell$ to the beginning of the permutation. For example, $\zeta_{3,3}=321$, $\zeta_{3,4}=32145$, $\zeta_{4,4}=42135$, and $\zeta_{5,4}=52134$. 

\begin{theorem}\label{Thm2}
Let $m\geq 3$ be an integer. A permutation $\pi\in S_{2m-3}$ is in the image of $s^{m-3}$ if and only if one of the following holds: 
\begin{itemize}
\item $\tl(\pi)\geq m-3$ and every descent top of $\pi$ is a left-to-right maximum of $\pi$;
\item $\pi=\zeta_{\ell,m}$ for some $\ell\in\{3,\ldots,m\}$. 
\end{itemize} 
Consequently, \[|s^{m-3}(S_{2m-3})|=B_m+m-2.\]
\end{theorem}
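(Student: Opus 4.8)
\emph{Overview.} The plan is to strip off the forced fixed points, reduce everything to a question about the first $m$ entries, deduce the enumeration as a routine corollary of the characterization, and then concentrate on proving the characterization itself — whose ``only if'' direction is the substantive part.

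\emph{Reduction and counting.} Since $\tl(s(\rho))\ge\tl(\rho)+1$ for every non-identity $\rho$ (the tail-length inequality that also drives Theorem~\ref{Thm1}), any $\pi\in s^{m-3}(S_{2m-3})$ has $\tl(\pi)\ge m-3$, so $\pi_i=i$ for $m+1\le i\le 2m-3$ and $\pi=\sigma\cdot(m+1)(m+2)\cdots(2m-3)$ for a unique $\sigma\in S_m$. From the identity $s(\alpha\cdot(k+1)\cdots(2m-3))=s(\alpha)\cdot(k+1)\cdots(2m-3)$ (peel the maxima off the right), the question whether $\pi\in s^{m-3}(S_{2m-3})$ becomes a statement purely about $\sigma$; moreover every descent top of $\pi$ is a left-to-right maximum exactly when the same holds of $\sigma$, and $\pi=\zeta_{\ell,m}$ exactly when $\sigma$ equals $\sigma_\ell$, the permutation $\ell\,2\,1\,3\cdots(\ell-1)(\ell+1)\cdots m$ of $[m]$ made of the first $m$ entries of $\zeta_{\ell,m}$ (this shape is available because $\tl(\zeta_{\ell,m})=2m-3-\ell\ge m-3$ for $\ell\le m$). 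So the theorem is equivalent to the assertion that, for $\sigma\in S_m$, one has $\sigma\cdot(m+1)\cdots(2m-3)\in s^{m-3}(S_{2m-3})$ if and only if every descent top of $\sigma$ is a left-to-right maximum or $\sigma=\sigma_\ell$ for some $\ell\in\{3,\ldots,m\}$; the case $m=3$ is the triviality $s^{0}(S_3)=S_3$. Granting the characterization, the count follows at once: permutations $\sigma\in S_m$ all of whose descent tops are left-to-right maxima are counted by $B_m$ via the standard bijection with set partitions of $[m]$ (order the blocks by increasing maximum, and within each block put the maximum first followed by the remaining elements in increasing order); each $\sigma_\ell$ violates the descent-top condition at the letter $2$, and the $m-2$ permutations $\sigma_\ell$ are distinct; hence $|s^{m-3}(S_{2m-3})|=B_m+(m-2)$.

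\emph{Sufficiency.} If every descent top of $\sigma$ is a left-to-right maximum, I would obtain a preimage of $\sigma\cdot(m+1)\cdots(2m-3)$ under $s^{m-3}$ by the very construction used for the ``if'' direction of Theorem~\ref{Thm1}, which reads a preimage chain directly off the block decomposition of $\sigma$ and is valid whenever the number of trailing fixed points is at least the number of iterations. For the exceptional permutations one first checks (peeling maxima off the right) that $s(\zeta_{\ell,m})=123\cdots(2m-3)$, and then exhibits an explicit $\rho_\ell\in S_{2m-3}$ with $s^{m-3}(\rho_\ell)=\zeta_{\ell,m}$ — for instance $\rho_3=3\,5\,7\cdots(2m-3)\,2\,4\,6\cdots(2m-4)\,1$, with analogous explicit permutations for the remaining $\ell$ — the identity being verified by tracking the (explicit) intermediate permutations of the chain, a finite computation that can be organized as an induction on $m$.

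\emph{Necessity, and the main obstacle.} Suppose $\sigma\cdot(m+1)\cdots(2m-3)=s^{m-3}(\rho)$. Applying $s$ once more, $s(\sigma)\cdot(m+1)\cdots(2m-3)=s^{m-2}(\rho)\in s^{m-2}(S_{2m-3})$; since $2m-3\ge 2(m-1)-2$, Theorem~\ref{Thm1} applies to $s^{m-2}(S_{2m-3})$ and forces every descent top of $s(\sigma)$ to be a left-to-right maximum. That is a real restriction on $\sigma$, but not the whole story: for example (with $m=5$) $\sigma=43215$ has $s(\sigma)=12345$ of the required form yet is not among the $\sigma_\ell$ and, by the theorem, does not produce a member of $s^{m-3}(S_{2m-3})$. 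So the heart of the matter is to show that among the $\sigma$ failing the descent-top condition but with $s(\sigma)$ satisfying it, the only ones for which $\sigma\cdot(m+1)\cdots(2m-3)$ is reachable in just $m-3$ iterations inside $S_{2m-3}$ are the $\sigma_\ell$. I would attack this by a direct study of a hypothetical chain $\rho=\rho_0\to\rho_1\to\cdots\to\rho_{m-3}=\pi$: the inequalities $\tl(\rho_j)\le\tl(\pi)-(m-3-j)$ pin down the tails of all the $\rho_j$, and the recursion $s(L\,M\,R)=s(L)\,s(R)\,M$ (with $M$ the largest entry) controls precisely how much disorder near the front of $\pi$ the last few steps can manufacture. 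The quantitative claim to be extracted is that creating any violation of the descent-top condition more intricate than a single ``$\sigma_\ell$-pattern'' costs either an extra iteration or an extra letter, so that with exactly the parameters $(2m-3,\,m-3)$ nothing beyond the $\sigma_\ell$ appears. I expect this quantitative step — which is precisely where the tightness of the bound $n\ge 2m-2$ in Theorem~\ref{Thm1} is felt — together with the bookkeeping needed when $\tl(\pi)>m-3$ (so that $\sigma$ itself carries a nontrivial tail, effectively reducing $m$), to be the hardest part, most plausibly handled by an induction on $m$ or a suitable monovariant measuring distance from the set of permutations with the descent-top property.
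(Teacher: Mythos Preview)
Your sufficiency direction and the counting are fine and parallel the paper (the paper's explicit preimage is $\xi_{\ell,m}=\ell(m+1)(m+2)\cdots(2m-3)\,23\cdots(\ell-1)(\ell+1)\cdots m\,1$, but yours for $\ell=3$ also works). The genuine gap is in the necessity direction: you correctly locate the hard step, but you stop at ``I expect this quantitative step \ldots\ to be the hardest part'' without supplying the mechanism, and your proposed reduction does not set it up. Applying $s$ once more only constrains $s(\pi)$, not $\pi$; for instance $\pi=4321$ has $s(\pi)=1234$ avoiding $32\overline{4}1$, yet $\pi$ contains the $32\overline{4}1$ occurrence $4,3,2$ not involving the entry $1$. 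So your observation does not localize the forbidden pattern, and without that localization there is no clear monovariant to run along the chain.

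The paper's argument supplies exactly the two ingredients you are missing. First, instead of applying $s$, it deletes the entry $1$: since $\del_1$ commutes with $s$ (Lemma~\ref{Lem:del}), the standardization of $\del_1(\pi)$ lies in $s^{(2m-4)-(m-1)}(S_{2m-4})$, where Theorem~\ref{Thm1} applies and forces $\del_1(\pi)$ to avoid $32\overline{4}1$; hence any $32\overline{4}1$ pattern in $\pi$ must use the entry $1$. Second, the precise monovariant is the position of $1$. Lemma~\ref{Lem2} shows that if $\mu$ contains an occurrence of $32\overline{4}1$ involving $1$ and $\nu\in s^{-1}(\mu)$, then $\nu$ also contains such an occurrence, and at least two entries from $\{3,\ldots,n\}$ that lie left of $1$ in $\nu$ lie right of $1$ in $\mu$. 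Iterated $m-3$ times along the chain, this forces at least $2(m-3)=2m-6$ such crossings; since $\pi$ has length $2m-3$ and a $32\overline{4}1$ through $1$ (so $\pi_3=1$ and there are exactly $2m-6$ entries right of $1$), all the inequalities collapse to equalities. This yields $R_{m-3}=\emptyset$ (so $\sigma$ ends in $1$, whence Lemma~\ref{Lem:Increasing} makes the entries right of $1$ in $\pi$ increasing) and $2\notin R_0$, and one reads off $\pi=\zeta_{\ell,m}$ with $\ell\le m$ from the tail bound. The ``two entries cross $1$ per step'' lemma is the quantitative heart you were gesturing at; without it the argument does not close.
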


\section{Preliminaries}
A \emph{permutation} is an ordering of a finite set of positive integers, which we write as a word in one-line notation. Let $S_n$ denote the set of permutations of $[n]$. The \emph{standardization} of a permutation $\pi=\pi_1\cdots\pi_n$ is the permutation obtained by replacing the $i^\text{th}$-smallest entry in $\pi$ with $i$ for all $i$. For example, the standardization of $3869$ is $1324$. We say entries $b,a$ \emph{form an occurrence of the pattern $21$ in $\pi$} if $a<b$ and $b$ appears to the left of $a$ in $\pi$. We say entries $b,c,a$ \emph{form an occurrence of the pattern $231$ in $\pi$} if $a<b<c$ and the entries $b,c,a$ appear in this order in $\pi$.  We will also make use of the barred pattern $32\overline{4}1$. We say $\pi$ \emph{contains} $32\overline{4}1$ if there are indices $i_1<i_2<i_3$ such that $\pi_{i_1}>\pi_{i_2}>\pi_{i_3}$ and such that $\pi_j<\pi_{i_1}$ whenever $i_2<j<i_3$. In this case, we say $\pi_{i_1},\pi_{i_2},\pi_{i_3}$ \emph{form an occurrence of the pattern $32\overline{4}1$ in $\pi$}. If $\pi$ does not contain $32\overline{4}1$, we say it \emph{avoids} $32\overline{4}1$. Let $\Av_n(32\overline{4}1)$ be the set of permutations in $S_n$ that avoid $32\overline{4}1$. 

The importance of the barred pattern $32\overline{4}1$ for our purposes comes from the following result due to Callan. We include its short proof for the sake of completeness. 

\begin{theorem}[\!\!\cite{Callan}]\label{Lem3241}
A permutation $\pi$ avoids $32\overline{4}1$ if and only if every descent top of $\pi$ is a left-to-right maximum of $\pi$. Furthermore, $|\Av_n(32\overline{4}1)|=B_n$.
\end{theorem}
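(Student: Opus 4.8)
The plan is to establish the two halves of the theorem separately: first the combinatorial characterization of $32\overline{4}1$-avoidance, and then the count $|\Av_n(32\overline{4}1)|=B_n$ via a bijection with set partitions of $[n]$.

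For the characterization I would argue by contraposition in each direction. If some descent top $\pi_i$ of $\pi$ is not a left-to-right maximum, then there is an index $i_1<i$ with $\pi_{i_1}>\pi_i$, and the entries $\pi_{i_1},\pi_i,\pi_{i+1}$ form an occurrence of $32\overline{4}1$: indeed $\pi_{i_1}>\pi_i>\pi_{i+1}$, and the requirement on entries lying strictly between positions $i$ and $i+1$ is vacuous. Conversely, suppose $\pi_{i_1},\pi_{i_2},\pi_{i_3}$ form an occurrence of $32\overline{4}1$. Since $\pi_{i_2}>\pi_{i_3}$, there must be a descent at some position $j$ with $i_2\le j\le i_3-1$; the entry $\pi_j$ is a descent top, and either $j=i_2$ (so $\pi_j=\pi_{i_2}<\pi_{i_1}$) or $i_2<j<i_3$ (so $\pi_j<\pi_{i_1}$ by the barred condition), and in both cases $\pi_{i_1}$ occurs before $\pi_j$ and exceeds it, so $\pi_j$ is not a left-to-right maximum. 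This part is routine and I do not expect any obstacle.

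The heart of the enumeration is a structural description: if every descent top of $\pi$ is a left-to-right maximum, then $\pi$ factors as $m_1B_1m_2B_2\cdots m_kB_k$, where $m_1<\cdots<m_k$ are the left-to-right maxima of $\pi$, listed in the (necessarily increasing) order in which they appear, and each $B_i$ is a block of consecutive entries, all smaller than $m_i$, arranged in increasing order (and possibly empty). The key mechanism is the contrapositive of the hypothesis: a non-left-to-right-maximum entry is never a descent top, hence is strictly smaller than the entry immediately after it; this forces each maximal run of non-left-to-right-maxima to be increasing, to begin immediately after a left-to-right maximum $m_i$, and — by a short argument ruling out that such a run could contain an entry exceeding $m_i$ — to lie entirely below $m_i$. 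Conversely, any permutation of this shape avoids $32\overline{4}1$, since its only descents are of the form $m_i$ followed by a nonempty $B_i$, whose top $m_i$ is a left-to-right maximum.

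With this in hand I would define a map $\Av_n(32\overline{4}1)\to\{\text{set partitions of }[n]\}$ sending $\pi=m_1B_1\cdots m_kB_k$ to the partition whose blocks are the sets $\{m_i\}\cup B_i$ (each of which has maximum $m_i$), with inverse sending a partition with blocks $C_1,\dots,C_k$, ordered so that $\max C_1<\cdots<\max C_k$, to the permutation obtained by writing, for $i=1,\dots,k$ in order, $\max C_i$ followed by the remaining elements of $C_i$ in increasing order. Checking that both maps are well defined and mutually inverse reduces to the structural description above; the step I would be most careful about is verifying that the permutation produced from a set partition genuinely has the block maxima as its left-to-right maxima and has exactly $C_i\setminus\{\max C_i\}$ as the run of non-left-to-right-maxima following $\max C_i$, so that the forward map recovers the original partition. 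Since the number of set partitions of $[n]$ is $B_n$ by definition, this gives $|\Av_n(32\overline{4}1)|=B_n$.
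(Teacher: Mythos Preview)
Your proposal is correct and follows essentially the same route as the paper. For the characterization, the forward direction is identical; for the converse, the paper picks the rightmost entry between the ``3'' and the ``1'' that exceeds the ``1'' (which is forced to be a descent top below the ``3''), whereas you pick any descent in the interval $[i_2,i_3-1]$ --- both choices work and the arguments are interchangeable. For the enumeration, the paper defines exactly your bijection (blocks $\{\pi_i : j(r)\le i<j(r+1)\}$ indexed by the left-to-right maxima), only stated more tersely; your structural description $m_1B_1\cdots m_kB_k$ with each $B_i$ increasing and below $m_i$ is precisely the content of the verification the paper leaves as ``straightforward to check.''
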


\begin{proof}
Write $\pi=\pi_1\cdots\pi_n$. If $\pi_i$ is a descent top of $\pi$ that is not a left-to-right maximum of $\pi$, then there exists $j<i$ such that $\pi_j>\pi_i$. Then $\pi_j,\pi_i,\pi_{i+1}$ form an occurrence of the pattern $32\overline{4}1$ in $\pi$. 

Conversely, suppose entries $c,b,a$ form an occurrence of $32\overline{4}1$ in $\pi$. Let $b'$ be the rightmost entry that appears between $c$ and $a$ in $\pi$ and is greater than $a$. We cannot have $b'>c$ since $c,b,a$ form an occurrence of $32\overline{4}1$. Therefore, $b'$ is a descent top of $\pi$ that is not a left-to-right maximum of $\pi$. 

Given $\pi\in\Av_n(32\overline{4}1)$, let $j(1)<\cdots<j(\ell)$ be the indices such that $\pi_{j(1)},\ldots,\pi_{j(\ell)}$ are the left-to-right maxima of $\pi$. Let $\beta_r(\pi)=\{\pi_i:j(r)\leq i<j(r+1)\}$. Let $\mathcal B(\pi)=\{\beta_1(\pi),\ldots,\beta_{\ell-1}(\pi)\}$. It is straightforward to check that the map $\mathcal B$ is a bijection from $\Av_n(32\overline{4}1)$ to the set of set partitions of $[n]$. Hence, $|\Av_n(32\overline{4}1)|=B_n$. 
\end{proof}

We now define the stack-sorting map $s$. Assume we are given an input permutation $\pi=\pi_1\cdots\pi_n$. Throughout this procedure, if the next entry in the input permutation is smaller than the entry at the top of the stack or if the stack is empty, the next entry in the input permutation is placed at the top of the stack. Otherwise, the entry at the top of the stack is annexed to the end of the growing output permutation. This procedure stops when the output permutation has length $n$. We then define $s(\pi)$ to be this output permutation. Figure~\ref{Fig1} illustrates this procedure and shows that $s(4162)=1426$.  

\begin{figure}[h]
\begin{center}
\includegraphics[width=1\linewidth]{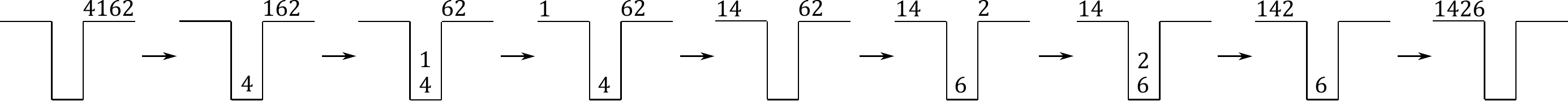}
\caption{The stack-sorting map $s$ sends $4162$ to $1426$.}
\label{Fig1}
\end{center}  
\end{figure}

There is also a simple recursive definition of the map $s$. First, we declare that $s$ sends the empty permutation to itself. Given a nonempty permutation $\pi$, we can write $\pi=LmR$, where $m$ is the largest entry in $\pi$. We then define $s(\pi)=s(L)s(R)m$. For example, 
\[s(5273614)=s(52)\,s(3614)\,7=s(2)\,5\,s(3)\,s(14)\,67=253\,s(1)\,467=2531467.\]

We now collect some basic facts about the stack-sorting map that will be useful in the next section. The following lemma is an immediate consequence of either definition of $s$. 

\begin{lemma}\label{Lem231}
Let $\pi$ be a permutation. Two entries $b,a$ form an occurrence of the pattern $21$ in $s(\pi)$ if and only if there is an entry $c$ such that $b,c,a$ form an occurrence of the pattern $231$ in $\pi$. 
\end{lemma}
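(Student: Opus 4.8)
The plan is a straightforward induction on the length of $\pi$, using the recursive description $s(\pi)=s(L)\,s(R)\,m$, where $m$ is the largest entry of $\pi$ and $\pi=LmR$. The base case $|\pi|\le 1$ holds vacuously (there are no two distinct entries to consider). For the inductive step, the first observation is that in neither condition can $b$ or $a$ equal $m$: in the ``$21$ in $s(\pi)$'' condition this is because $m$ is the last entry of $s(\pi)$ and is its largest entry, so $m$ can be neither the left element nor (being the maximum) the right element of a $21$; in the ``$231$ in $\pi$'' condition it is because $c>b>a$ forces $b,a<c\le m$. Hence each of $b$ and $a$ lies in the block $L$ or in the block $R$.

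I would then split into cases according to which blocks contain $b$ and $a$. The case $b\in R$, $a\in L$ cannot occur in either condition, since it would force $a$ to precede $b$ both in $\pi$ (contradicting that $b$ comes first in a $231$) and in $s(\pi)=s(L)\,s(R)\,m$ (contradicting that $b$ comes first in a $21$). If $b$ and $a$ both lie in $L$, then in $\pi$ every entry occurring between them also lies in $L$, so a $231$-occurrence $b,c,a$ in $\pi$ is precisely a $231$-occurrence in $L$; and in $s(\pi)$ the entries of $L$ occur, within the initial factor $s(L)$, in their $s(L)$-order, so a $21$-occurrence $b,a$ in $s(\pi)$ is precisely one in $s(L)$. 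Thus the equivalence for $\pi$ is here exactly the equivalence for $L$, which holds by induction; the case $b,a\in R$ is identical, with $R$ and $s(R)$ replacing $L$ and $s(L)$. Finally, in the case $b\in L$, $a\in R$ both conditions hold unconditionally: in $\pi$ the entry $m$ occurs between $b$ and $a$ and $b<m$, so $b,m,a$ is a $231$-occurrence (the witness $c$ being $m$); and in $s(\pi)=s(L)\,s(R)\,m$ the entry $b$ (which lies in $s(L)$) precedes the entry $a$ (which lies in $s(R)$), while $b>a$, so $b,a$ is a $21$-occurrence. This exhausts the cases and completes the induction.

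The only step requiring a moment's thought is the mixed case, together with the observation that there the global maximum $m$ can always serve as the witnessing entry $c$; everything else is bookkeeping with the recursion. (The statement can alternatively be read off the stack-sorting algorithm directly: a smaller entry $a$ is output after $b$ exactly when $b$ is popped before $a$ is pushed, which happens exactly when some entry exceeding $b$ — the entry that triggers $b$'s pop — is read between $b$ and $a$ in the input, and this entry is the required $c$. I would nonetheless favour the recursive argument, as it is cleaner to make fully rigorous.)
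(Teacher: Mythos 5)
Your proof is correct. The paper gives no argument for this lemma, remarking only that it is ``an immediate consequence of either definition of $s$''; your careful induction on the recursive decomposition $\pi = LmR$, with $s(\pi) = s(L)\,s(R)\,m$, is precisely the rigorous version of the argument from the recursive definition, and the algorithmic reading you sketch at the end is the other. Your case analysis is complete and sound: $m$ cannot play the role of $a$ or $b$ on either side; the sub-case $b\in R$, $a\in L$ is impossible on both sides; when $a,b$ share a block, both a $231$-occurrence in $\pi$ and a $21$-occurrence in $s(\pi)$ are forced to live entirely inside $L$ (resp.\ $R$) and $s(L)$ (resp.\ $s(R)$), so the claim reduces to the inductive hypothesis; and in the mixed case $b\in L$, $a\in R$ the maximum $m$ always serves as the witness $c$. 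One minor stylistic note: in that last case you say ``both conditions hold unconditionally,'' but of course this is under the standing assumption $a<b$ (without which both sides are false by the very definitions of the patterns $21$ and $231$, and the biconditional is trivially true); this is implicit throughout your argument and is not a gap.
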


Given a nonempty permutation $\pi$, we define $\del_1(\pi)$ to be the permutation obtained by deleting the smallest entry from $\pi$. For example, $\del_1(49628)=4968$. 

\begin{lemma}\label{Lem:del}
If $\pi$ is a nonempty permutation, then $s^t(\del_1(\pi))=\del_1(s^t(\pi))$ for every $t\geq 0$. 
\end{lemma}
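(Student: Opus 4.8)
The plan is to reduce the statement to the single case $t=1$ and then prove that case by induction on the length of $\pi$, using the recursive description $s(\pi)=s(L)s(R)m$ where $\pi=LmR$ and $m$ is the largest entry of $\pi$.

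First I would dispatch the reduction in $t$. The case $t=0$ is immediate, since $s^0$ is the identity and $\del_1(\pi)=\del_1(\pi)$. For the inductive step, observe that $s^t(\pi)$ is a permutation of the same underlying set as $\pi$, so it is nonempty and has the same smallest entry as $\pi$. Thus, granting the identity $s(\del_1(\sigma))=\del_1(s(\sigma))$ for every nonempty permutation $\sigma$, we get $s^{t+1}(\del_1(\pi))=s(s^t(\del_1(\pi)))=s(\del_1(s^t(\pi)))=\del_1(s(s^t(\pi)))=\del_1(s^{t+1}(\pi))$, where the second equality is the inductive hypothesis and the third is the $t=1$ identity applied to $\sigma=s^t(\pi)$.

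Next I would prove $s(\del_1(\pi))=\del_1(s(\pi))$ by induction on $|\pi|$. If $|\pi|=1$, both sides are the empty permutation. Otherwise write $\pi=LmR$ with $m$ the largest entry; since $|\pi|\geq 2$, the smallest entry of $\pi$ lies in $L$ or in $R$ (it is not $m$). Suppose it lies in $L$. Then $L$ is nonempty, $\del_1(\pi)=\del_1(L)\,m\,R$, and $m$ is still the largest entry of $\del_1(\pi)$, so $s(\del_1(\pi))=s(\del_1(L))\,s(R)\,m=\del_1(s(L))\,s(R)\,m$, the last step by the inductive hypothesis applied to $L$. On the other hand, the smallest entry of $\pi$ appears in $s(L)$ (same entry set as $L$), so $\del_1(s(\pi))=\del_1(s(L)\,s(R)\,m)=\del_1(s(L))\,s(R)\,m$, and the two expressions agree. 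The case where the smallest entry lies in $R$ is symmetric, using the inductive hypothesis applied to $R$ and the fact that deleting the smallest entry from $s(L)\,s(R)\,m$ removes it from the $s(R)$ block.

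I do not anticipate a serious obstacle here; the only points requiring a little care are (a) that deleting the smallest entry interacts cleanly with the decomposition $\pi=LmR$ — it is removed from exactly the block containing it, and $m$ remains the maximum afterward — and (b) that the block to which the inductive hypothesis is applied is genuinely nonempty (it contains the smallest entry). As a remark I would point out an alternative argument that avoids the recursion: a permutation is determined by its set of inversions (occurrences of $21$), and by Lemma~\ref{Lem231} the inversions of $s(\pi)$ correspond to occurrences of $231$ in $\pi$; since the smallest entry of $\pi$ can only play the role of the ``$1$'' in an occurrence of $231$, the occurrences of $231$ in $\del_1(\pi)$ are exactly those of $\pi$ avoiding the smallest entry, which matches the inversions of $\del_1(s(\pi))$, giving $s(\del_1(\pi))=\del_1(s(\pi))$.
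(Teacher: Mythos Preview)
Your proof is correct and follows the paper's argument essentially line for line: reduce to $t=1$ by induction on $t$, then prove that case by induction on the length of $\pi$ via the decomposition $\pi=LmR$, splitting on whether the smallest entry lies in $L$ or in $R$. The alternative inversion-based argument you sketch at the end is a pleasant extra that does not appear in the paper.
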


\begin{proof}
It suffices to prove the case in which $t=1$; the general case will then follow by induction on $t$. We proceed by induction on the length $n$ of $\pi$, noting first that the proof is trivial if $n=1$. Assume $n\geq 2$. Write $\pi=LmR$, where $m$ is the largest entry in $\pi$. If the smallest entry in $\pi$ is in $L$, then $s(\del_1(\pi))=s(\del_1(L)mR)
=s(\del_1(L))s(R)m=\del_1(s(L))Rm=\del_1(s(L)s(R)m)=\del_1(s(\pi))$, where we have used the induction hypothesis to see that $s(\del_1(L))=\del_1(s(L))$. Similarly, if the smallest entry in $\pi$ is in $R$, then $s(\del_1(\pi))=s(Lm\del_1(R))
=s(L)s(\del_1(R))m=s(L)\del_1(s(R))m=\del_1(s(L)s(R)m)=\del_1(s(\pi))$. 
\end{proof}

\begin{lemma}\label{Lem:Increasing}
Let $\sigma\in S_n$ be a permutation whose last entry is $1$, and let $t\geq 0$. The entries to the right of $1$ in $s^t(\sigma)$ appear in increasing order in $s^t(\sigma)$. 
\end{lemma}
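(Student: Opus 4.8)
The plan is to deduce Lemma~\ref{Lem:Increasing} from a statement that is stable under a single application of $s$, and then iterate. Call a permutation \emph{good} if it contains the entry $1$ and the entries lying to the right of $1$ in it appear in increasing order. I would prove: \textbf{if $\pi$ is good, then $s(\pi)$ is good.} Granting this, Lemma~\ref{Lem:Increasing} is immediate by induction on $t$: the permutation $\sigma$ ends in $1$, so it has no entries to the right of $1$ and is therefore (vacuously) good; hence $s^t(\sigma)$ is good for every $t\geq 0$, which is exactly the assertion. It is worth emphasizing why one cannot simply induct on $t$ ``directly'': for $t\geq 1$ the permutation $s^t(\sigma)$ ends in its largest entry, not in $1$, so the hypothesis ``ends in $1$'' is not preserved and one genuinely needs the more flexible notion of a good permutation.

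To prove that $s$ preserves goodness I would induct on the length $n$ of $\pi$, using the recursive description $s(\pi)=s(L)\,s(R)\,m$, where $\pi=LmR$ and $m$ is the largest entry of $\pi$. The case $n=1$ forces $\pi=1$ and is trivial, so assume $n\geq 2$; then $m\neq 1$, so $1$ lies in $L$ or in $R$. If $1\in R$, then the entries to the right of $1$ in $\pi$ form a suffix of $R$, so $R$ is good; by the induction hypothesis we may write $s(R)=U\,1\,V$ with $V$ increasing, and then $s(\pi)=s(L)\,U\,1\,V\,m$. Since $1\notin L$, this displays the unique copy of $1$ in $s(\pi)$, and the entries to its right are those of $V$ followed by $m$; this word is increasing because every entry of $V$ is an entry of $R$ and hence is smaller than $m=\max(\pi)$. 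If instead $1\in L$, I would first observe that $R$ must be empty: if $R$ were nonempty, its entries (all smaller than $m$) would lie to the right of the entry $m$ within the ``right of $1$'' block of $\pi$, contradicting that this block is increasing. Thus $\pi=Lm$, the entries to the right of $1$ in $L$ are precisely those to the right of $1$ in $\pi$ other than $m$ and are therefore increasing, so $L$ is good; applying the induction hypothesis to $L$ gives $s(L)=U\,1\,V$ with $V$ increasing, and $s(\pi)=s(L)\,m=U\,1\,V\,m$ is handled exactly as in the previous case.

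The only subtle point --- the ``main obstacle,'' modest as it is --- is the case $1\in L$, where one must notice that goodness of $\pi$ forces $R=\emptyset$; this is what collapses the recursion to a form that the induction hypothesis can digest. The remaining verifications are routine: that the displayed occurrence of $1$ is the only one (because $s$ merely rearranges the entries and $1$ belongs to exactly one of $L$ and $R$), and that appending $\max(\pi)$ to an increasing word consisting of smaller entries leaves it increasing. One could alternatively phrase the induction step using Lemma~\ref{Lem231}, tracking which entries can land to the right of $1$ after stack-sorting, but the recursive argument above seems cleanest.
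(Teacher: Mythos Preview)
Your proof is correct, but it proceeds differently from the paper's. The paper inducts directly on $t$: assuming $s^{t-1}(\sigma)=L1R$ with $R$ increasing, it argues by contradiction that $s^t(\sigma)$ has the same property, using Lemma~\ref{Lem231} (the $21/231$ correspondence) to show that any putative inversion to the right of $1$ in $s^t(\sigma)$ would force $1$ itself to be involved in an inversion there. You instead isolate the single-step claim ``$s$ preserves goodness'' and prove it by structural induction on the length of the permutation via the recursion $s(LmR)=s(L)s(R)m$, with the key observation that $1\in L$ forces $R=\emptyset$. Your approach is self-contained and avoids Lemma~\ref{Lem231} entirely; the paper's is shorter and reuses machinery already in place. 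One small framing quibble: your remark that ``one cannot simply induct on $t$ directly'' is not quite right---the paper does exactly that, with the inductive hypothesis being precisely your notion of goodness---the real difference lies in how the single-step preservation is verified, not in whether the outer induction is on $t$.
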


\begin{proof}
The lemma is vacuously true if $t=0$, so we may assume $t\geq 1$ and proceed by induction on $t$. We can write $s^{t-1}(\sigma)=L1R$, where $R$ is an increasing permutation. Suppose by way of contradiction that there are entries $a,b$ appearing to the right of $1$ in $s^t(\sigma)$ such that $b$ appears to the left of $a$ and $a<b$. Then $b,a$ form an occurrence of the pattern $21$ in $s^t(\sigma)$, so it follows from Lemma~\ref{Lem231} that there is an entry $c$ such that $b,c,a$ form an occurrence of $231$ in the permutation $s^{t-1}(\sigma)=L1R$. Notice that $c$ must be in $L$ because $R$ is an increasing permutation. However, this means that $b,c,1$ form an occurrence of the pattern $231$ in $s^{t-1}(\sigma)$, so $b,1$ form an occurrence of the pattern $21$ in $s^t(\sigma)$. This contradicts the fact that $b$ appears to the right of $1$ in $s^t(\sigma)$.   
\end{proof}

The next lemma follows immediately from a simple inductive argument and the definition of $s$; it is the reason why $s^{n-1}(S_n)=\{123\cdots n\}$.  

\begin{lemma}\label{Lem:tl}
If $\sigma\in S_n$ and $t\geq 0$, then $\tl(s^t(\sigma))\geq t$.
\end{lemma}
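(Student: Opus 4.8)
\textbf{Proof plan for Lemma~\ref{Lem:tl}.}

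The plan is to prove the statement by induction on $t$, with the base case $t=0$ being immediate since $\tl(s^0(\sigma))=\tl(\sigma)\geq 0$ always holds. For the inductive step, it suffices to show that if $\tau$ is any permutation in $S_n$ with $\tl(\tau)\geq t$, then $\tl(s(\tau))\geq t+1$; applying this with $\tau=s^t(\sigma)$ and using the inductive hypothesis $\tl(s^t(\sigma))\geq t$ then gives $\tl(s^{t+1}(\sigma))\geq t+1$, as desired. (One must be slightly careful at the boundary: if $t=n$ then $\sigma=123\cdots n$ and there is nothing to prove, so we may assume $t<n$, which guarantees $s^t(\sigma)$ has an entry that is not part of its sorted tail.)

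So the heart of the matter is the following claim: if $\tau=\tau_1\cdots\tau_n\in S_n$ satisfies $\tau_i=i$ for all $i\in\{n-t+1,\ldots,n\}$, then $s(\tau)$ satisfies $s(\tau)_i=i$ for all $i\in\{n-t,\ldots,n\}$. I would prove this using the recursive definition $s(LmR)=s(L)\,s(R)\,m$, where $m$ is the largest entry of $\tau$. The key observation is that the suffix $(n-t+1)(n-t+2)\cdots n$ of $\tau$ consists of consecutive large values appearing in increasing order at the very end. In particular $m=n$, and when we write $\tau=LmR$, the block $R$ is exactly $(n-t+1)\cdots(n-1)$ — an increasing run of the $t-1$ largest remaining entries — while $L$ is a permutation of $\{1,\ldots,n-t\}$ (together with nothing else). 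Since $R$ is already increasing, $s(R)=R=(n-t+1)\cdots(n-1)$. Hence $s(\tau)=s(L)\,(n-t+1)\cdots(n-1)\,n$, and because $s(L)$ is a permutation of $\{1,\ldots,n-t\}$ of length $n-t$, its entries all occupy positions $1,\ldots,n-t$; the entry $n-t$ must in fact sit in position $n-t$ because it is the largest entry of $s(L)$ and a stack-sorted permutation has its maximum entry last. Therefore positions $n-t,n-t+1,\ldots,n$ of $s(\tau)$ hold the values $n-t,n-t+1,\ldots,n$ respectively, i.e. $\tl(s(\tau))\geq t+1$.

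The main obstacle — really the only subtlety — is handling the degenerate shapes of $L$ and $R$ correctly and justifying that the maximum entry of a permutation in the image of $s$ lands in the final position (equivalently, that $s$ always outputs its largest input entry last, which is transparent from either definition of $s$). One should also double-check the edge case $t=0$ of the claim, where $R$ is empty and the statement $\tl(s(\tau))\geq 1$ just says that $s(\tau)$ ends in its largest entry $n$ — again immediate from the recursion. Once these small points are dispatched, the induction on $t$ closes and the lemma follows; the remark that this is "the reason why $s^{n-1}(S_n)=\{123\cdots n\}$" is then the special case $t=n-1$, since a permutation of length $n$ with tail length at least $n-1$ is forced to be the identity.
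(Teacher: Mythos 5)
Your overall strategy—induction on $t$, reduced to the single-step claim that $\tl(\tau)\geq t$ implies $\tl(s(\tau))\geq t+1$, using the recursive formula $s(LmR)=s(L)\,s(R)\,m$—is sound and is exactly the ``simple inductive argument'' the paper alludes to without spelling out. The final formula you arrive at, $s(\tau)=s(\tau_1\cdots\tau_{n-t})\,(n-t+1)\cdots(n-1)\,n$ with $\tau_1\cdots\tau_{n-t}$ a permutation of $\{1,\ldots,n-t\}$, together with the (correct) observation that $s$ always places the maximum of its input last, does indeed deliver $\tl(s(\tau))\geq t+1$.

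However, there is a real slip in how you justify that formula. When $\tl(\tau)\geq t\geq 1$ we have $\tau_n=n$, so in the decomposition $\tau=LmR$ with $m=n$ the block $R$ is \emph{empty} and $L=\tau_1\cdots\tau_{n-1}$ is a permutation of $\{1,\ldots,n-1\}$. Your identification of $L$ as a permutation of $\{1,\ldots,n-t\}$ and $R=(n-t+1)\cdots(n-1)$ would only be the $LmR$ decomposition if $n$ sat in position $n-t$, which contradicts the hypothesis $\tau_{n-t}=n-t$ (for $t\geq 2$). The formula $s(\tau)=s(\tau_1\cdots\tau_{n-t})\,(n-t+1)\cdots n$ is true, but it comes from \emph{$t$ nested applications} of the recursion (peeling off $n$, then $n-1$, down to $n-t+1$, each time with empty $R$), not a single one. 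The cleanest repair is to run the induction on the length $n$ instead: strip the final entry $n$, observe that $\tau'=\tau_1\cdots\tau_{n-1}$ has $\tl(\tau')\geq t-1$ in $S_{n-1}$, invoke the inductive hypothesis to get $\tl(s(\tau'))\geq t$ (capped at $n-1$), and conclude $\tl(s(\tau))=\tl(s(\tau')\,n)\geq t+1$. Either way the argument closes; just don't attribute the whole suffix peel to a single application of $s(LmR)=s(L)s(R)m$.
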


Theorems~\ref{Thm1} and \ref{Thm2}, which we prove in the next section, determine the sizes of $s^{n-m}(S_n)$ when $n\geq 2m-3$. We end this section with a simple proposition that gives some information about the sizes of $s^{n-m}(S_n)$ for all $n\geq m$ when $m$ is fixed. 

\begin{proposition}\label{Prop2}
Fix a positive integer $m$. The sequence $(|s^{n-m}(S_n)|)_{n\geq m}$ is nonincreasing. 
\end{proposition}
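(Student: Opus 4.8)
The plan is to exhibit, for each $n > m$, an injection from $s^{n-m}(S_n)$ into $s^{(n-1)-m}(S_{n-1})$, which immediately gives $|s^{n-m}(S_n)| \leq |s^{n-1-m}(S_{n-1})|$ and hence the nonincreasing property. The natural candidate for this injection is the ``delete the smallest entry'' operation $\del_1$, whose interaction with iterates of $s$ is exactly what Lemma~\ref{Lem:del} controls. So the first step is to check that $\del_1$ does map $s^{n-m}(S_n)$ into $s^{(n-1)-m}(S_{n-1})$: if $\pi = s^{n-m}(\sigma)$ for some $\sigma \in S_n$, then by Lemma~\ref{Lem:del} we have $\del_1(\pi) = \del_1(s^{n-m}(\sigma)) = s^{n-m}(\del_1(\sigma))$, and since $\del_1(\sigma) \in S_{n-1}$ and $n - m = (n-1)-m + 1 \geq (n-1)-m \geq 0$, a single further application of $s$ (or none, after reindexing — note $s^{n-m}(\del_1(\sigma)) = s^{(n-1)-m}(s(\del_1(\sigma)))$) shows $\del_1(\pi) \in s^{(n-1)-m}(S_{n-1})$. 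One should also standardize afterward so that the image genuinely lands in $S_{n-1}$; since $\del_1(\pi)$ is already a permutation of $[n] \setminus \{1\}$, which standardizes by subtracting $1$ from every entry, and since $s$ commutes with standardization, this causes no difficulty.

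The remaining and slightly more delicate step is injectivity of $\del_1$ on $s^{n-m}(S_n)$. In general $\del_1$ is certainly not injective on all of $S_n$, so this must use the structure of highly sorted permutations. Here Lemma~\ref{Lem:tl} is the key: any $\pi \in s^{n-m}(S_n)$ satisfies $\tl(\pi) \geq n-m \geq 1$ (using $n \geq m$, and when $n = m$ the sequence has only the single term $|s^0(S_m)| = m!$ so there is nothing to prove; for $n > m$ we have $\tl(\pi) \geq 1$). Thus the entry $n$ sits in position $n$ of $\pi$, the entry $n-1$ in position $n-1$, and so on down through position $n - \tl(\pi) + 1$. In particular the entry $1$ of $\pi$, being the smallest, cannot be any of these fixed tail entries (as $\tl(\pi) \geq 1$ forces the tail to consist of the largest entries), so $1$ appears somewhere in the first $n - \tl(\pi)$ positions — and crucially its position is the unique position at which the smallest entry sits. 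Knowing $\del_1(\pi)$ together with the fact that the missing entry is $1$ and that $\tl$ forces a predictable suffix, one recovers $\pi$: the value $1$ must be reinserted, and there is only one slot for it consistent with $\pi$ being a permutation of $[n]$ with the tail dictated by $s$. Making this precise is the main obstacle, and I would handle it by arguing directly that from $\del_1(\pi) \in S_{n-1}$ (in standardized form) one reconstructs $\pi$ as follows: add $1$ to every entry of $\del_1(\pi)$, then insert the entry $1$ immediately before the leftmost entry that exceeds every entry to its left and whose removal-and-relabeling yields back $\del_1(\pi)$ — more cleanly, one shows that if $\del_1(\pi) = \del_1(\pi')$ with $\pi,\pi' \in s^{n-m}(S_n)$, then since both have $\tl \geq 1$ and hence last entry $n$, and deleting $1$ preserves the relative order of everything else, the position of $1$ in each is forced to be the same, whence $\pi = \pi'$.

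Assembling these pieces: $\del_1$ (composed with standardization) is a well-defined injection $s^{n-m}(S_n) \hookrightarrow s^{(n-1)-m}(S_{n-1})$ for every $n > m$, so $|s^{n-m}(S_n)| \leq |s^{(n-1)-m}(S_{n-1})|$, and the sequence $(|s^{n-m}(S_n)|)_{n \geq m}$ is nonincreasing. I expect the write-up to be short; the only point requiring care is the injectivity argument, and even there the content is just that $\del_1$ loses no information once we know the deleted value is the minimum and the tail structure is constrained by Lemma~\ref{Lem:tl}.
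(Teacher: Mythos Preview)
Your well-definedness argument is fine, but the injectivity step has a genuine gap: $\del_1$ is \emph{not} injective on $s^{n-m}(S_n)$. Take $m=2$, $n=3$: one checks directly that $s(S_3)=\{123,213\}$, yet $\del_1(123)=\del_1(213)=23$. The same phenomenon persists for larger $n$ (e.g.\ $1234$ and $2134$ both lie in $s^2(S_4)$ and both map to $234$). The sentence ``since both have $\tl\geq 1$ and hence last entry $n$, and deleting $1$ preserves the relative order of everything else, the position of $1$ in each is forced to be the same'' is simply false: knowing $\del_1(\pi)$ tells you the relative order of the entries $2,\ldots,n$, but gives no information about \emph{where} the entry $1$ sat, and the tail condition only pins down the rightmost few positions, not the location of $1$.

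The paper's proof uses the opposite deletion: since $\tl(\pi)\geq n-m\geq 1$, the last entry of $\pi$ is $n$, so one writes $\pi=\pi^*n$ and maps $\pi\mapsto\pi^*$. This is trivially injective (appending $n$ recovers $\pi$), and one checks $\pi^*\in s^{(n-1)-m}(S_{n-1})$ by writing $s(\sigma)=\tau n$ with $\tau\in S_{n-1}$ and observing $\pi^*n=s^{n-m-1}(\tau n)=s^{n-m-1}(\tau)\,n$. Your instinct to use a deletion map was right; you just deleted from the wrong end.
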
 

\begin{proof}
Suppose $n\geq m+1$ and $\pi\in s^{n-m}(S_n)$. Let $\sigma$ be such that $s^{n-m}(\sigma)=\pi$. We can write $\pi=\pi^*n$ and $s(\sigma)=\tau n$ for some $\pi^*,\tau\in S_{n-1}$. We have \[\pi^*n=\pi=s^{n-m}(\sigma)=s^{n-m-1}(\tau n)=s^{n-m-1}(\tau)n,\] so $\pi^*=s^{n-m-1}(\tau)\in s^{(n-1)-m}(S_{n-1})$. This shows that the map $\pi\mapsto \pi^*$ is an injection from $s^{n-m}(S_n)$ to $s^{(n-1)-m}(S_{n-1})$.  
\end{proof}

Theorem~\ref{Thm2} and Proposition~\ref{Prop2} tell us that $|s^{n-m}(S_n)|\geq B_m+m-2$ whenever $m\leq n\leq 2m-3$. 

\section{Proofs of the Main Theorems}

We now establish some results that will lead up to the proofs of Theorems~\ref{Thm1} and \ref{Thm2}. Recall that $\LRmax(\pi)$ denotes the set of left-to-right maxima of a permutation $\pi$. The reader may find it helpful to refer to Example~\ref{Exam1} for an illustration of the next lemma's proof. 

\begin{lemma}\label{Lem1}
If $\pi$ is a permutation that avoids $32\overline{4}1$ and ends in its largest entry, then there exists $\sigma\in s^{-1}(\pi)$ such that $\sigma$ avoids $32\overline{4}1$ and $\LRmax(\sigma)=\LRmax(\pi)$. 
\end{lemma}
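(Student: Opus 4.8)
The plan is to construct $\sigma$ explicitly from $\pi$ by ``splicing in'' the largest entry in the right place. Write $\pi = \pi_1 \cdots \pi_n$ with $\pi_n = n$ (the largest entry). The recursive definition $s(LmR) = s(L)s(R)m$ suggests that to find a preimage we should reverse this: we want to write $\pi = s(L)s(R)n$ and then set $\sigma = LnR$. Since $\pi$ ends in $n$, we have $\pi = \pi^* n$ where $\pi^* = \pi_1 \cdots \pi_{n-1} \in S_{n-1}$, and we need to split $\pi^*$ as a concatenation $uv$ with $u$ a sorted permutation of an initial segment of values and $v$ sorted, then take preimages. The natural choice, guided by the barred-pattern structure, is to let $u$ be the prefix of $\pi^*$ consisting of everything up to and including the last left-to-right maximum of $\pi$ that is less than... actually, the cleanest choice: let $u = \pi_1 \cdots \pi_{k}$ where $\pi_k$ is the \emph{last} entry of $\pi^*$ that is a left-to-right maximum of $\pi$, and $v = \pi_{k+1}\cdots \pi_{n-1}$.

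First I would check that with this split, every entry of $v$ is smaller than every entry of $u$: indeed each entry of $v$ is not a left-to-right maximum, so it is smaller than some earlier entry; and since $\pi$ avoids $32\overline41$, an entry that is smaller than something before it and sits after the last left-to-right maximum $\pi_k$ must in fact be smaller than $\pi_k$ itself — otherwise one gets a $32\overline41$ occurrence using $\pi_k$, a later larger-than-threshold entry, and a still-later smaller entry, or more simply $v$'s entries can be shown to all lie below $\min(u)$'s left-to-right-maximum threshold. This is the step I expect to be the main obstacle: pinning down exactly why the avoidance of $32\overline41$ forces $v$ to be an increasing run of entries all below everything in $u$, so that $v = s(v)$ trivially and $v$ contributes no new left-to-right maxima. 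Once that is established, $u$ is a permutation of $\{$ some value set $\}$ that ends in its own largest entry (namely $\pi_k$) and still avoids $32\overline41$, so by induction on length there is $\sigma_u \in s^{-1}(u)$ avoiding $32\overline41$ with $\LRmax(\sigma_u) = \LRmax(u)$.

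Then I would set $\sigma = \sigma_u \, n \, v$ and verify the three required properties. For $s(\sigma) = \pi$: by the recursive definition $s(\sigma_u n v) = s(\sigma_u) s(v) n = u \, v \, n = \pi$, using $s(v) = v$. For $\LRmax(\sigma) = \LRmax(\pi)$: the left-to-right maxima of $\sigma_u n v$ are those of $\sigma_u$, together with $n$, and nothing from $v$ (since every entry of $v$ is below everything in $\sigma_u$); meanwhile $\LRmax(\pi) = \LRmax(u) \cup \{n\}$ for the same reason on the $\pi$ side, and $\LRmax(\sigma_u) = \LRmax(u)$ by the inductive hypothesis, so the two sets agree. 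For $\sigma$ avoiding $32\overline41$: by Theorem~\ref{Lem3241} it suffices to show every descent top of $\sigma$ is a left-to-right maximum of $\sigma$. A descent top of $\sigma = \sigma_u n v$ lies either inside $\sigma_u$ (handled by the inductive hypothesis plus the fact that $\LRmax(\sigma_u) \subseteq \LRmax(\sigma)$), or is $n$ (which is a left-to-right maximum), or lies inside $v$ — but $v$ is increasing, so there is no descent there, except possibly the descent at the boundary between $n$ and the first entry of $v$, whose top is $n$, again a left-to-right maximum. The base cases ($\pi$ of length $1$, or $\pi^*$ having no left-to-right maximum among its entries, i.e. $\pi = n \pi_2 \cdots \pi_{n-1} n$ — impossible since entries are distinct, so really the base is length $\le 2$) are immediate. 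I would present the induction carefully, making sure the value set of $u$ is an ``interval-free'' set of positive integers so that Theorem~\ref{Lem3241} and the recursion apply verbatim to permutations of arbitrary finite sets of positive integers, as the paper's conventions allow.
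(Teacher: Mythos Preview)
Your overall strategy---split $\pi = uvn$ at the last left-to-right maximum $\pi_k$ of $\pi^*$, recurse on $u$, and set $\sigma = \sigma_u\, n\, v$---is sound and genuinely different from the paper's proof, which instead inducts by deleting the entry $1$ and then reinserting it in the inductively constructed preimage. However, the claim you single out as ``the main obstacle,'' that every entry of $v$ lies below every entry of $u$, is false: take $\pi = 24135$, where $k=2$, $u = 24$, $v = 13$, and $3 \in v$ exceeds $2 \in u$. So that step, as written, would fail.

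Fortunately you never actually need that claim. What you do need, and what is true, is that $v$ is increasing (no entry of $v$ is a left-to-right maximum of $\pi$, hence by Theorem~\ref{Lem3241} none can be a descent top of $\pi$), so $s(v) = v$; and that no entry of $v$ is a left-to-right maximum either of $\pi$ (by the choice of $k$) or of $\sigma$ (because in $\sigma = \sigma_u\, n\, v$ every entry of $v$ sits to the right of $n$). These observations already carry all three verifications you outline: $s(\sigma)=s(\sigma_u)s(v)n=uvn=\pi$, $\LRmax(\sigma)=\LRmax(\sigma_u)\cup\{n\}=\LRmax(u)\cup\{n\}=\LRmax(\pi)$, and every descent top of $\sigma$ is either a descent top of $\sigma_u$ or $n$ itself. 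With that one false lemma removed, your argument is complete and arguably cleaner than the paper's: the paper must locate the correct spot in $\sigma'$ to reinsert $1$ and then argue via the stack dynamics that $s(\sigma)=\pi$, whereas your decomposition makes $s(\sigma)=\pi$ immediate from the recursive definition of $s$.
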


\begin{proof}
Let $n$ be the length of $\pi$, and write $\pi=\pi_1\cdots\pi_n$. The lemma is trivial if $n=1$, so we may assume $n\geq 2$ and proceed by induction on $n$. The lemma is true for $\pi$ if and only if it is true for the standardization of $\pi$, so we may assume without loss of generality that $\pi\in S_n$. Let $r$ be the index such that $\pi_r=1$. Let $\pi'=\del_1(\pi)$. Since $\pi'$ is a permutation of length $n-1$ that avoids $32\overline{4}1$ and ends in its largest entry, it follows by induction that there exists $\sigma'\in s^{-1}(\pi')$ such that $\sigma'$ avoids $32\overline{4}1$ and $\LRmax(\sigma')=\LRmax(\pi')$. Suppose for the moment that $r=1$, and let $\sigma=1\sigma'$. It follows immediately from the definition of $s$ that $s(\sigma)=1s(\sigma')=1\pi'=\pi$. Furthermore, $\sigma$ avoids $32\overline{4}1$ because $\sigma'$ does. Finally, \[\LRmax(\sigma)=\{1\}\cup\LRmax(\sigma')=\{1\}\cup\LRmax(\pi')=\LRmax(\pi).\] 

Now assume $r\geq 2$, and let $a=\pi_{r-1}$. Since $\pi$ avoids $32\overline{4}1$ and $a$ is a descent top of $\pi$, we know by Theorem~\ref{Lem3241} that $a\in\LRmax(\pi)=\LRmax(\pi')=\LRmax(\sigma')$. We assumed that $\pi$ ends in its largest entry, so $a$ is not the largest entry of $\pi$. Therefore, $a$ is not the largest entry of $\sigma'$. Because $a\in\LRmax(\sigma')$, there must be an entry to the right of $a$ in $\sigma'$ that is larger than $a$; among all such entries, let $b$ be the one that is farthest to the left in $\sigma'$. Let $\sigma$ be the permutation obtained by inserting the entry $1$ immediately after $b$ in $\sigma'$. Note that $\LRmax(\sigma)=\LRmax(\sigma')=\LRmax(\pi)$. In particular, $a\in\LRmax(\sigma)$. Since $b$ is the leftmost entry in $\sigma$ that is larger than $a$ and to the right of $a$, we must have $b\in\LRmax(\sigma)$. Because $\sigma'$ avoids $32\overline{4}1$, we know by Theorem~\ref{Lem3241} that every descent top of $\sigma'$ is in $\LRmax(\sigma')$. Every descent top of $\sigma$, except possibly $b$, is a descent top of $\sigma'$. It follows that every descent top of $\sigma$ is in $\LRmax(\sigma)$, so $\sigma$ avoids $32\overline{4}1$. 

We are left to prove that $s(\sigma)=\pi$. Imagine applying the stack-sorting procedure to $\sigma$. Because $a$ is a left-to-right maximum of $\sigma$, it will never sit on top of any entries in the stack. By the choice of $b$, there must be a point in time during the procedure when $b$ is next in line to enter the stack and $a$ is the only entry in the stack. In the next steps in the procedure, $a$ is popped out, $b$ is pushed in, $1$ is pushed in, and then $1$ is popped out. It follows that $1$ appears immediately to the right of $a$ in $s(\sigma)$. Recall that $1$ also appears immediately to the right of $a$ in $\pi$. We know that $\del_1(\pi)=\pi'=s(\sigma')=s(\del_1(\sigma))=\del_1(s(\sigma))$ by Lemma~\ref{Lem:del}. Therefore, $s(\sigma)=\pi$.
\end{proof}

\begin{example}\label{Exam1}
Let us give a concrete example of the proof of Lemma~\ref{Lem1}. Suppose $\pi=527148369$. We have $r=4$ and $\pi'=52748369$. We can take $\sigma'=57284936$ since this permutation avoids $32\overline 41$, has the same left-to-right maxima as $\pi$, and satisfies $s(\sigma')=\pi'$. Because $r\geq 2$, we put $a=\pi_{r-1}=7$. The entries that are larger than $a$ and to the right of $a$ in $\sigma'$ are $8$ and $9$; we choose $b$ to be the one that is farthest to the left in $\sigma'$, which is $8$. Then $\sigma=572814936$. Observe that $\sigma$ does indeed avoid $32\overline 41$ and satisfy $s(\sigma)=\pi$. \hfill$\lozenge$
\end{example}

\begin{proposition}\label{Prop1}
If $\pi\in \Av_n(32\overline{4}1)$ has tail length $\ell$, then $\pi\in s^\ell(\Av_n(32\overline{4}1))$. 
\end{proposition}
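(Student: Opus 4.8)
The plan is to induct on the tail length $\ell=\tl(\pi)$, with the statement understood to range over all $n$ simultaneously. The base case $\ell=0$ is immediate, since $s^{0}$ is the identity map and $\pi$ already avoids $32\overline{4}1$.

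For the inductive step, suppose $\ell\geq 1$. Then $\pi$ ends in its largest entry $n$, so we may write $\pi=\pi^{*}n$ with $\pi^{*}\in S_{n-1}$. I would first record two routine stability properties of the pattern $32\overline{4}1$. First, deleting the last entry of a permutation cannot create an occurrence of $32\overline{4}1$: an occurrence in the shortened word uses positions $i_{1}<i_{2}<i_{3}$ all lying in the unshortened portion, and the positions strictly between $i_{2}$ and $i_{3}$ were unaffected by the deletion, so the same positions give an occurrence in the original permutation. Second, appending a new entry larger than every current entry cannot create an occurrence of $32\overline{4}1$: in any occurrence $i_{1}<i_{2}<i_{3}$ the entry in position $i_{3}$ is the smallest of the three, so the new (last, maximal) entry cannot occupy position $i_{3}$, and it cannot occupy position $i_{1}$ or $i_{2}$ either since then there would be no later position available for $i_{3}$; hence the new entry is never used. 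From the first property, $\pi^{*}\in\Av_{n-1}(32\overline{4}1)$, and it is straightforward from the definition of $\tl$ that $\tl(\pi^{*})=\ell-1$ (the trailing fixed points of $\pi^{*}$ are exactly those of $\pi$ apart from the one in position $n$).

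By the inductive hypothesis applied to $\pi^{*}$, there is a permutation $\rho\in\Av_{n-1}(32\overline{4}1)$ with $s^{\ell-1}(\rho)=\pi^{*}$. Let $\rho n\in S_{n}$ be obtained by appending $n$ to $\rho$. The second stability property gives $\rho n\in\Av_{n}(32\overline{4}1)$. Moreover, since $n$ exceeds every entry of $\rho$, the recursive definition of $s$ gives $s(\rho n)=s(\rho)n$, and an easy induction on $t$ then yields $s^{t}(\rho n)=s^{t}(\rho)n$ for all $t\geq 0$; in particular $s^{\ell-1}(\rho n)=s^{\ell-1}(\rho)n=\pi^{*}n=\pi$. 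Now $\rho n$ avoids $32\overline{4}1$ and ends in its largest entry, so Lemma~\ref{Lem1} supplies a permutation $\sigma\in s^{-1}(\rho n)$ that avoids $32\overline{4}1$. Then $s^{\ell}(\sigma)=s^{\ell-1}(s(\sigma))=s^{\ell-1}(\rho n)=\pi$, so $\pi\in s^{\ell}(\Av_{n}(32\overline{4}1))$, completing the induction.

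Once Lemma~\ref{Lem1} is available I do not expect any genuine obstacle; the only points requiring a little care are the two stability properties above and the commutation identity $s^{t}(\rho n)=s^{t}(\rho)n$. The reason the argument is organized by peeling off the trailing $n$, recursing on $\pi^{*}$, re-attaching $n$, and only then invoking Lemma~\ref{Lem1}, rather than simply applying Lemma~\ref{Lem1} to $\pi$ repeatedly, is that a single application of Lemma~\ref{Lem1} produces a preimage that need not end in its largest entry, so Lemma~\ref{Lem1} cannot be applied to it a second time directly.
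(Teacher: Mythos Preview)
Your proof is correct and follows essentially the same route as the paper's: peel off the trailing $n$, apply the inductive hypothesis to $\pi^{*}$ to obtain a preimage $\rho$ (the paper calls it $\tau$), re-attach $n$, and invoke Lemma~\ref{Lem1} on $\rho n$. The only differences are cosmetic---you spell out the pattern-stability properties and the identity $s^{t}(\rho n)=s^{t}(\rho)n$ that the paper leaves implicit, and you fold the $\ell=1$ case into the induction rather than citing Lemma~\ref{Lem1} directly for it.
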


\begin{proof}
The statement is trivial if $\ell=0$, and it is immediate from Lemma~\ref{Lem1} if $\ell=1$. Let us now assume $\ell\geq 2$ and proceed by induction on $\ell$. Let $\pi^*$ be the permutation in $\Av_{n-1}(32\overline{4}1)$ obtained by removing the entry $n$ from $\pi$. Since $\pi^*$ has tail length $\ell-1$, it follows by induction that there exists $\tau\in\Av_{n-1}(32\overline{4}1)$ such that $s^{\ell-1}(\tau)=\pi^*$. Now consider the permutation $\tau n$, which avoids $32\overline{4}1$ and ends in its largest entry. By Lemma~\ref{Lem1}, there exists $\sigma\in\Av_n(32\overline{4}1)$ such that $s(\sigma)=\tau n$. We have $s^\ell(\sigma)=s^{\ell-1}(\tau n)=s^{\ell-1}(\tau)n=\pi^* n=\pi$, as desired. 
\end{proof}

\begin{lemma}\label{Lem2}
Suppose $\pi\in S_n$ contains an occurrence of the pattern $32\overline{4}1$ that involves the entry $1$, and suppose $\sigma\in s^{-1}(\pi)$. Then $\sigma$ also contains an occurrence of the pattern $32\overline{4}1$ that involves the entry $1$. Furthermore, there exist distinct entries $c,d\in\{3,\ldots,n\}$ that appear to the left of $1$ in $\sigma$ and appear to the right of $1$ in $\pi$.   
\end{lemma}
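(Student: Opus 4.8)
The plan is to reduce both the hypothesis and the two conclusions to assertions about the position of the entry $1$, and then to analyze the portion of $\sigma$ lying to the left of $1$ using Lemma~\ref{Lem231}. Write $\sigma=\sigma_1\cdots\sigma_n$ with $\sigma_k=1$, and put $P=\sigma_1\cdots\sigma_{k-1}$. The basic tool I would extract from Lemma~\ref{Lem231} is this: for an entry $v\ne 1$ of $\sigma$, we have that $v$ lies to the left of $1$ in $s(\sigma)$ if and only if $v$ lies to the left of $1$ in $\sigma$ and some entry larger than $v$ lies between $v$ and $1$ in $\sigma$. Indeed, $v$ lies to the left of $1$ in $s(\sigma)$ iff $v,1$ form an occurrence of $21$ in $s(\sigma)$, which by Lemma~\ref{Lem231} happens iff there is an entry $\gamma$ with $v,\gamma,1$ an occurrence of $231$ in $\sigma$, and any such $\gamma$ lies between $v$ and $1$ in $\sigma$ and exceeds $v$. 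Negating this, the entries of $\sigma$ that lie to the left of $1$ in $\sigma$ but to the right of $1$ in $\pi=s(\sigma)$ are precisely the right-to-left maxima of $P$, i.e., the entries $\sigma_i$ with $i<k$ and $\sigma_i>\sigma_j$ for all $i<j<k$. I would also record (by an argument like the one in the proof of Theorem~\ref{Lem3241}) that a permutation contains an occurrence of $32\overline{4}1$ involving its entry $1$ if and only if $1$ is not its first entry and the entry immediately preceding $1$ is not a left-to-right maximum of that permutation. Applying this last fact to $\pi$ and to $\sigma$, the hypothesis becomes the statement that $1$ is not the first entry of $\pi$ and that the entry $b$ immediately preceding $1$ in $\pi$ is not a left-to-right maximum of $\pi$, while the first conclusion reduces to the statement that $1$ is not the first entry of $\sigma$ and that $\sigma_{k-1}\ne\max P$.

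The heart of the proof is then the claim that $P$ has at least two right-to-left maxima that are $\ge 3$; call the two largest of them $M_1=\max P$ and $M_2$. Granting this, $M_1$ and $M_2$ are distinct elements of $\{3,\ldots,n\}$ lying in $P$ (hence to the left of $1$ in $\sigma$) and, being right-to-left maxima of $P$, lying to the right of $1$ in $\pi$, which is the ``furthermore'' clause. Moreover $M_1=\max P$ is a right-to-left maximum of $P$ that is distinct from $\sigma_{k-1}$, so $\sigma_{k-1}\ne\max P$; together with the easy observation that $1$ is not the first entry of $\sigma$ (otherwise $\pi=s(\sigma)$ would begin with $1$), this yields the first conclusion. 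The easy half of the claim is that $M_1\ge 3$: since $b$ lies to the left of $1$ in $\pi$, the criterion above forces $b\in P$ and $b$ not a right-to-left maximum of $P$, so $\max P>b\ge 2$.

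The main obstacle is the remainder of the claim, namely that $P$ has a \emph{second} right-to-left maximum and that this second one is $\ge 3$ rather than equal to $2$; equivalently, I must rule out three degenerate shapes for $P$: that $1$ is the first entry of $\sigma$; that $\sigma_{k-1}=\max P$ (so $P$ has a single right-to-left maximum); and that $P$ has exactly two right-to-left maxima, namely $\max P$ and $2$. I would dispose of each of these by showing that it forces $b$ to be a left-to-right maximum of $\pi$ (or $1$ to be the first entry of $\pi$), contradicting the hypothesis. The key input is the recursive description $s(Q)=s(Q_L)\,s(Q_R)\,\max Q$, which shows that $s(Q)$ ends in $\max Q$, together with the observation that once the stack-sorting procedure applied to $\sigma$ processes an entry larger than everything processed before it, the output produced up to that point equals $s$ of the prefix processed so far. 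For instance, if $\sigma_{k-1}=\max P$ then $P=Q\,\sigma_{k-1}$ with every entry of $Q$ smaller than $\sigma_{k-1}$, so the part of $\pi$ preceding $1$ is $s(Q)$; hence either $Q$ is empty and $\pi$ begins with $1$, or $b$ is the last entry of $s(Q)$, i.e., $\max Q$, and every entry of $\pi$ before $b$ is an entry of $Q$ smaller than $\max Q$, making $b$ a left-to-right maximum of $\pi$. The two-right-to-left-maxima case reduces to this one after one checks that it forces $\sigma_{k-1}=2$ and $\sigma_{k-2}=\max P$, and the case of $1$ first in $\sigma$ is immediate. With these degenerate cases excluded, $P$ has at least two right-to-left maxima and its second-largest one is $\ge 3$, and the proof is complete.
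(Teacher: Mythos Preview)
Your proof is correct, but it takes a genuinely different route from the paper's. The paper argues directly: letting $a=\pi_{r-1}$ be the entry immediately preceding $1$ in $\pi$ and choosing $b>a$ to its left in $\pi$, it uses Lemma~\ref{Lem231} to produce a specific entry $c$ (the rightmost entry with $b,c,a$ a $231$ in $\sigma$) and a specific entry $d$ (the rightmost entry between $a$ and $1$ in $\sigma$ exceeding $a$), shows $c,a,1$ is a $32\overline 41$ in $\sigma$, and checks that $c,d\ge 3$ land to the right of $1$ in $\pi$. No case analysis is needed, and distinctness of $c,d$ is immediate from their positions relative to $a$.

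Your argument instead proves a structural fact the paper never states: the entries that move from the left of $1$ in $\sigma$ to the right of $1$ in $\pi$ are \emph{exactly} the right-to-left maxima of the prefix $P=\sigma_1\cdots\sigma_{k-1}$. Combined with your reformulation of ``$32\overline 41$ involving $1$'' as ``the entry just before $1$ is not a left-to-right maximum,'' this reduces both conclusions to showing that $P$ has at least two right-to-left maxima and that the second largest is at least $3$. The cost is a short case analysis (your cases (a), (b), (c)), each of which you dispatch by computing the prefix of $\pi$ before $1$ as $s(Q)$ for a suitable $Q$ and observing that its last entry $\max Q$ would be a left-to-right maximum of $\pi$. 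Your reduction of case~(c) to case~(b) via $\sigma_{k-1}=2$, $\sigma_{k-2}=\max P$ is fine; it may be worth writing out explicitly that the prefix of $\pi$ before $1$ is then $s(\sigma_1\cdots\sigma_{k-3})$, since the intervening $2$ does not affect the output before $1$ pops.

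In short: the paper's proof is shorter and more direct; yours is a little longer but yields the pleasant characterization of the ``crossing'' entries as right-to-left maxima of $P$, which is a nice way to see why exactly two entries are guaranteed.
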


\begin{proof}
Write $\pi=\pi_1\cdots\pi_n$. Let $r$ be the index such that $\pi_r=1$. Let $a=\pi_{r-1}$. Since  $\pi$ contains an occurrence of the pattern $32\overline{4}1$ that involves the entry $1$, there must be an entry $b>a$ that appears to the left of $a$ in $\pi$. Since $b,a$ form an occurrence of the pattern $21$ in $\pi$, it follows from Lemma~\ref{Lem231} that there is an entry $c>b$ such that $b,c,a$ form an occurrence of the pattern $231$ in $\sigma$; among all such entries $c$, choose the one that appears farthest to the right in $\sigma$. This choice of $c$ implies that there are no entries between $c$ and $a$ in $\sigma$ that are greater than $c$. It follows that $c$ appears to the right of $a$ in $s(\sigma)=\pi$. Because $\pi_{r-1}=a$ and $\pi_r=1$, the entry $c$ must appear to the right of $1$ in $\pi$. By Lemma~\ref{Lem231}, there cannot be an entry that is greater than $c$ and lies between $c$ and $1$ in $\sigma$. Hence, $c,a,1$ form an occurrence of the pattern $32\overline{4}1$ in $\sigma$.   

To prove the second statement of the lemma, let $d$ be the rightmost entry in $\sigma$ that is greater than $a$ and lies between $a$ and $1$ in $\sigma$; note that such an entry necessarily exists because $a,1$ form an occurrence of the pattern $21$ in $\pi$. Since $c$ and $d$ are both greater than $a$, which is itself greater than $1$, we must have $c,d\in\{3,\ldots,n\}$. Furthermore, $c$ and $d$ appear to the left of $1$ in $\sigma$. We saw above that $c$ lies to the right of $1$ in $\pi$. Our choice of $d$ implies that there are no entries greater than $d$ lying between $d$ and $1$ in $\sigma$, so $d$ also lies to the right of $1$ in $\pi$. 
\end{proof}

We can now complete the proofs of Theorems~\ref{Thm1} and \ref{Thm2}. 

\begin{proof}[Proof of Theorem~\ref{Thm1}] 
First, suppose that $\tl(\pi)\geq n-m$ and that every descent top of $\pi$ is a left-to-right maximum of $\pi$. Theorem~\ref{Lem3241} tells us that $\pi\in \Av_n(32\overline{4}1)$, so it follows from Proposition~\ref{Prop1} that $\pi\in s^{\tl(\pi)}(\Av_n(32\overline{4}1))\subseteq s^{\tl(\pi)}(S_n)\subseteq s^{n-m}(S_n)$. 

We now prove that if $n\geq 2m-2$ and $\pi\in s^{n-m}(S_n)$, then $\tl(\pi)\geq n-m$ and $\pi\in\Av_n(32\overline{4}1)$ (again, the latter condition is equivalent to the condition that every descent top of $\pi$ is a left-to-right maximum of $\pi$). When $m=1$, the claim is easy since $s^{n-1}(S_n)=\{123\cdots n\}$. We now assume $m\geq 2$ and proceed by induction on $m$. Suppose $\pi=s^{n-m}(\sigma)$ for some $\sigma\in S_n$. We already know from Lemma~\ref{Lem:tl} that $\tl(\pi)\geq n-m$, so we need only show that $\pi$ avoids $32\overline{4}1$. Let $\widehat\pi$ and $\widehat\sigma$ be the standardizations of $\del_1(\pi)$ and $\del_1(\sigma)$, respectively. By Lemma~\ref{Lem:del}, we have \[\del_1(\pi)=\del_1(s^{n-m}(\sigma))=s^{n-m}(\del_1(\sigma)),\] so $\widehat\pi=s^{n-m}(\widehat\sigma)$. This shows that $\widehat\pi\in s^{(n-1)-(m-1)}(S_{n-1})$, so it follows by induction that $\widehat\pi$ avoids $32\overline{4}1$. Consequently, $\del_1(\pi)$ avoids $32\overline{4}1$. To complete the proof, it suffices to show that $\pi$ does not contain an occurrence of the pattern $32\overline{4}1$ that involves the entry $1$. Assume by way of contradiction that 
$\pi$ does contain an occurrence of the pattern $32\overline{4}1$ that involves the entry $1$. For $0\leq i\leq n-m$, let $\sigma^{(i)}=s^{n-m-i}(\sigma)$. Thus, $\sigma^{(0)}=\pi$ and $\sigma^{(n-m)}=\sigma$. Since $\sigma^{(i)}\in s^{-1}(\sigma^{(i-1)})$, we can use Lemma~\ref{Lem2} and induction on $i$ to see that each of the permutations $\sigma^{(i)}$ contains an occurrence of the pattern $32\overline{4}1$ that involves the entry $1$. Let $r_i$ be the index such that the $r_i^\text{th}$ entry in $\sigma^{(i)}$ is $1$. Note that every entry appearing to the right of $1$ in $\sigma^{(i+1)}$ also appears to the right of $1$ in $\sigma^{(i)}$. Furthermore, Lemma~\ref{Lem2} tells us that there are at least $2$ entries that lie to the left of $1$ in $\sigma^{(i+1)}$ and lie to the right of $1$ in $\sigma^{(i)}$. It follows that $r_i\leq r_{i+1}-2$ for all $0\leq i\leq n-m-1$. Consequently, $r_0\leq r_{n-m}-2(n-m)\leq n-2(n-m)=2m-n\leq 2m-(2m-2)=2$. This says that $1$ is either the first or second entry of $\pi$, which contradicts our assumption that $\pi$ contains an occurrence of the pattern $32\overline{4}1$ involving $1$. 

We have shown that a permutation $\pi\in S_n$ is in the image of $s^{n-m}$ if and only if it avoids $32\overline{4}1$ and has tail length at least $n-m$. It follows that $|s^{n-m}(S_n)|=|\Av_m(32\overline{4}1)|$, and we know by Theorem~\ref{Lem3241} that $|\Av_m(32\overline{4}1)|$ is the Bell number $B_m$. 
\end{proof}

\begin{proof}[Proof of Theorem~\ref{Thm2}]
Let $\pi\in S_{2m-3}$ for some $m\geq 3$. If $\tl(\pi)\geq m-3$ and every descent top of $\pi$ is a left-to-right maximum of $\pi$, then $\pi\in \Av_{2m-3}(32\overline{4}1)$, so it follows from Proposition~\ref{Prop1} that $\pi\in s^{\tl(\pi)}(\Av_{2m-3}(32\overline{4}1))\subseteq s^{\tl(\pi)}(S_{2m-3})\subseteq s^{m-3}(S_{2m-3})$. Now suppose $\pi=\zeta_{\ell,m}$ for some $\ell\in\{3,\ldots,m\}$. Let \[\xi_{\ell,m}=\ell(m+1)(m+2)\cdots (2m-3)23\cdots(\ell-1)(\ell+1)\cdots m1.\] In other words, $\xi_{\ell,m}$ is obtained from the permutation $(m+1)(m+2)\cdots (2m-3)123\cdots m$ by moving the entry $\ell$ to the beginning and moving the entry $1$ to the end. It is straightforward to check that $s^{m-3}(\xi_{\ell,m})=\zeta_{\ell,m}$, so $\pi=\zeta_{\ell,m}$ is in the image of $s^{m-3}$. 

To prove the converse, suppose $\pi=s^{m-3}(\sigma)$ for some $\sigma\in S_{2m-3}$. We know by Lemma~\ref{Lem:tl} that $\tl(\pi)\geq m-3$. Now suppose that not every descent top of $\pi$ is in $\LRmax(\pi)$; we will prove that $\pi=\zeta_{\ell,m}$ for some $\ell\in\{3,\ldots,m\}$. Let $\widehat\pi$ and $\widehat\sigma$ be the standardizations of $\del_1(\pi)$ and $\del_1(\sigma)$, respectively. By Lemma~\ref{Lem:del}, we have \[\del_1(\pi)=\del_1(s^{m-3}(\sigma))=s^{m-3}(\del_1(\sigma)),\] so $\widehat\pi=s^{m-3}(\widehat\sigma)$. This shows that $\widehat\pi\in s^{(2m-4)-(m-1)}(S_{2m-4})$, so Theorem~\ref{Thm1} guarantees that $\widehat\pi$ avoids $32\overline{4}1$. Consequently, $\del_1(\pi)$ avoids $32\overline{4}1$. Since not every descent top of $\pi$ is in $\LRmax(\pi)$, it must be the case that
$\pi$ contains an occurrence of the pattern $32\overline{4}1$ that involves the entry $1$. 

For $0\leq i\leq m-3$, let $\sigma^{(i)}=s^{m-3-i}(\sigma)$. Thus, $\sigma^{(0)}=\pi$ and $\sigma^{(m-3)}=\sigma$. Since $\sigma^{(i)}\in s^{-1}(\sigma^{(i-1)})$, we can use Lemma~\ref{Lem2} and induction on $i$ to see that each of the permutations $\sigma^{(i)}$ contains an occurrence of the pattern $32\overline{4}1$ that involves the entry $1$. Let $R_i$ be the set of entries that appear to the right of $1$ in $\sigma^{(i)}$. Note that $R_0\supseteq R_1\supseteq\cdots\supseteq R_{m-3}$. Furthermore, Lemma~\ref{Lem2} tells us that there are at least $2$ elements of $\{3,\ldots,2m-3\}$ in $R_i\setminus R_{i+1}$ for each $i\in\{0,\ldots,m-4\}$. This proves that there are at least $2m-6$ elements of $\{3,\ldots,2m-3\}$ in $R_0\setminus R_{m-3}$. Since $\pi$ has length $2m-3$ and contains an occurrence of $32\overline{4}1$ that involves the entry $1$, we must have $\pi_1>\pi_2>\pi_3=1$. Furthermore, there must be exactly $2m-6$ entries to the right of $1$ in $\pi$, so \[2m-6\leq |(R_0\setminus R_{m-3})\cap\{3,\ldots,2m-3\}|\leq|R_0\setminus R_{m-3}|\leq|R_0|=2m-6.\] These inequalities must, of course, be equalities, so $R_{m-3}=\emptyset$ and $R_0\subseteq \{3,\ldots,m-3\}$. The fact that $R_{m-3}$ is empty tells us that $\sigma$ ends in the entry $1$. Since $\pi=s^{m-3}(\sigma)$, it follows from Lemma~\ref{Lem:Increasing} that all of the entries to the right of $1$ in $\pi$ appear in increasing order. The fact that $R_0\subseteq\{3,\ldots,m-3\}$ tells us that $2$ appears to the left of $1$ in $\pi$. Putting this all together, we find that $\pi=\zeta_{\ell,m}$ for some $\ell\in\{3,\ldots,2m-3\}$. We have seen that the tail length of $\pi$ is at least $m-3$, so we must actually have $\ell\in\{3,\ldots,m\}$, as desired. 
\end{proof}

\section{Open Problems}

We saw in Proposition~\ref{Prop2} that for each fixed $m\geq 1$, the sequence $(|s^{n-m}(S_n)|)_{n\geq m}$ is nonincreasing. It is clear that the first term in this sequence is $|s^0(S_m)|=|S_m|=m!$. The second term in the sequence is $|s(S_{m+1})|$, the number of sorted permutations in $S_{m+1}$; as mentioned in the introduction, the asymptotics of $|s(S_{m+1})|$ for large $m$ are not precisely known. In this article, we showed that $|s^{n-m}(S_n)|=B_m$ for all $n\geq 2m-2$ and that $|s^{n-m}(S_n)|=B_m+m-2$ when $n=2m-3$. What can be said about $|s^{m-4}(S_{2m-4})|$? What can be said asymptotically about the approximate sizes of the numbers $|s^{n-m}(S_n)|$ for $m\leq n\leq 2m-2$ when $m$ is large? It would be interesting to have a more precise understanding of how the sequence $(|s^{n-m}(S_n)|)_{n=m}^{2m-2}$ decreases from $m!$ to $B_m$.   

\section{Acknowledgments}
The author was supported by a Fannie and John Hertz Foundation Fellowship and an NSF Graduate Research Fellowship (grant no. DGE-1656466).

\end{document}